\def\inv{^{-1}}
\DeclareMathOperator{\del}{\partial}
\DeclareMathOperator{\dlog}{dlog}
\def\refp #1.{(\ref{#1})}
\newcommand{\ul}[1]{\underline {#1}}
\def\sbr #1.{^{[#1]}}
\def\sfl #1.{^{\lfloor #1\rfloor}}
\def\inv{^{-1}}
\def\?{{\bf{??}}}
\def\dgla{differential graded Lie algebra\ }
\def\HH{\mathbb H}
\def\C{\mathbb C}
\def\P{\mathbb P}
\def\Q{\mathbb Q}
\def\O{\mathcal O}
\def\g{\mathfrak g}
\def\m{\mathfrak m}
\def\1/2{\frac{1}{2}}
\def\Ann{\textrm{Ann}}
\def\2{{[2]}}
\def\nl{\newline}
\def\<{\langle}
\def\>{\rangle}
\def\2{{[2]}}
\def\scl #1.{^{\lceil#1\rceil}}
\def\spr #1.{^{(#1)}}
\def\sbc #1.{^{\{#1\}}}
\def\subpr#1.{_{(#1)}}
\def\beq{\begin{equation*}}
\def\eeq{\end{equation*}}
\newcommand{\sing}{{\mathrm{sing}}}
\newcommand{\llog}[1]{\langle\log {#1} \rangle}
\newcommand{\llogp}[1]{\langle\log  {^+#1} \rangle}
\newcommand{\llogpp}[1]{\langle\log  {^{++} #1} \rangle}
\newcommand{\mlog}[1]{\langle-\log {#1} \rangle}
\def\g3{{\Gamma\spr 3.}}
\newcommand{\eqspl}[2]{
%\ss{\bf{label:#1}}\nl
\begin{equation}\label{#1}
\begin{split}
%\ul{\bf{label: #1}}\\
#2\end{split}\end{equation}}
\newcommand{\beginalphaenum}{
\begin{enumerate}\renewcommand{\labelenumi}{ }
\item \begin{enumerate}
}
\def\eex{\end{rm}\end{example}}
\def\pisharp{\Pi^\sharp}
\def\piflat{\Pi^\flat}
\newtheorem{thm}{Theorem} % [section]
\newtheorem*{thm*}{Theorem}
\newtheorem*{prop*}{Proposition}
\newtheorem{cor}[thm]{Corollary}
\newtheorem*{cor*}{Corollary}
\newtheorem{lem}[thm]{Lemma}
\newtheorem*{claim*}{Claim}
\newtheorem{defn}[thm]{Definition}%number together with conj
\theoremstyle{remark}
\newtheorem{rem}[thm]{Remark}
\newtheorem*{rem*}{Remark}
\newtheorem{crit-rem}[thm]{Critical remark}
\newtheorem{example}[thm]{Example}
\newtheorem*{example*}{Example}
\newtheorem*{lem*}{Lemma}
\newtheorem*{defn*}{Definition}
\begin{document} 
%\end{document}
%\title{Differential complexes, degeneracy\\ and Hodge theory on Poisson manifolds}
\title{A Bogomolov unobstructedness theorem\\  for
 log-symplectic manifolds in general
position}
\author %{author}
{Ziv Ran}
%\end{document}
%\Large

%\thanks{arxiv.org/1705.08366\\ The author thanks the  IHES for support and hospitality during 
%preparation of this paper}
%{\raggedright{

%Partially supported by NSA Grant MDA904-02-1-0094} }
%\thanks{The author is grateful to the IHES for its hospitality during preparation of this paper}
\date {\today}% \enddate

%\affil University of California, Riverside\endaffil

\address {\nl UC Math Dept. \nl
Big Springs Road Surge Facility
\nl
Riverside CA 92521 US\nl 
ziv.ran @  ucr.edu\nl
\url{http://math.ucr.edu/~ziv/}
}

%\email {ziv.ran @ucr.edu}
 \subjclass[2010]{14J40, 32G07, 32J27, 53D17}
\keywords{Poisson structure, log-symplectic manifold, deformation
theory, log complex, mixed Hodge theory}

\begin{abstract}
We consider compact K\"ahlerian manifolds $X$ 
of even dimension 4 or more, endowed with
a log-symplectic holomorphic Poisson structure 
$\Pi$ which is sufficiently general,
in a precise linear sense, with respect to its (normal-crossing)
degeneracy divisor $D(\Pi)$. We prove that $(X, \Pi)$ has unobsrtuced
deformations, that the tangent space to
 its deformation space can be identified in terms
of the mixed Hodge structure on $H^2$ of the open symplectic
manifold  $X\setminus D(\Pi)$, and in fact coincides with this $H^2$
provided the Hodge number $h^{2,0}_X=0$, and finally
that the degeneracy locus $D(\Pi)$
deforms locally trivially under deformations of $(X, \Pi)$.
\end{abstract}
\maketitle
We consider here holomorphic Poisson manifolds $(X, \Pi)$, i.e. complex manifolds $X$ 
(generally, compact and K\"ahlerian) endowed
with a holomorphic Poisson structure $\Pi$. We say that$(X, \Pi)$ is  \emph{log-symplectic}
if $X$ has even dimension $2n$ and the degeneracy or Pfaffian divisor $D(\Pi)$ of $\Pi$,
i.e. the divisor of the section $\Pi^n\in H^0(X, \wedge^{2n}T_X)=H^0(X, -K_X)$,
is a reduced divisor with (local) normal crossings (NB: there are other notions of log-symplectic
in the literature). $(X, \Pi)$ is \emph{simply log-symplectic}
if moreover $D(\Pi)$ has simple normal crossings, i.e. is a 
transverse union of smooth
components. We note that Lima and Pereira \cite{lima-pereira} have shown that if
$(X, \Pi)$ is simply log symplectic and $X$ is a Fano manifold of dimension
4 or more with cyclic Picard group, then $X$ is $\P^{2n}$ with a standard 
(toric)
Poisson stucture \[\Pi=\sum a_{ij}x_ix_j\del/\del{x_i}\del/\del{x_j},\]
$x_i$ homogeneous coordinates (and consequently deformations of 
$(X, \Pi)$ are of the same kind, at least set-theoretically).
%and moreover if $(a_{ij})$ is general then the deformations of $\Pi$ 
%are of the same type. 
\par
A natural question about Poisson manifolds is that of understanding
 deformations of the pair $(X, \Pi)$; in particular
whether $(X, \Pi)$ has unobstructed deformations, and whether
such deformations change (e.g. smooth out) the singularities of the
degeneracy locus $D(\Pi)$ or whether, on the contrary, $D(\Pi)$
deforms locally trivially.
When $(X, \Pi)$ is symplectic, i.e. $D(\Pi)=0$, unobtructedness
is of course well known (Bogomolov, Tian, Todorov).
In \cite{qsymplectic} we proved unobstructedness and local triviality
 when $\Pi$ satisfies
a special condition called $P-normality$ which states that $D$ is smooth at every
point where the corank of $\Pi$ is exactly 2. This condition
 is equivalent to saying that
$\Pi$ has the largest possible corank, viz. $2m$, at a point of given multiplicity
$m$ on $D$ (or equivalently, the smallest multiplicity for given corank).
P-normality is also equivalent to $(X, \Pi)$ being decomopsable
 locally as the 
product of Poisson surfaces with smooth or empty degeneracy divisor.\par
Our interest here is in Poisson structures with a property that goes in the
opposite direction to, and indeed excludes, P-normality. This property,
that we call $r$-general position and need just for $r=2$, 
states that $\Pi$ is 'general' in a neighborhood of $D$, in the sense
that the columns of  its matrix with respect to
a coordinate system adapted to $D$ satisfy a linear general position
property. The 2-general position property implies that 
$(X, \Pi)$ cannot split off locally a factor equal to a Poisson surface with
singular (normal-crossing) degeneracy divisor. Our main result (Theorem
\ref{mainthm}) states that if $\Pi$ is in 2-general position (which implies that 
$2n=\dim(X)\geq 4$), then $(X, \Pi)$ has unobstructed deformations
and moreover, these deformations induce locally trivial deformations
of the degeneracy divisor $D(\Pi)$. In fact, the
 tangent space to the deformation space
can be identified locally in terms of the (mixed) Hodge theory
of the open symplectic variety $U=X\setminus D(\Pi)$,
namely is equals $F^1H^2(U, \C)$. 
Whenever $h^{2,0}_X=0$, the latter coincides with $H^2(U, \C)$.
For $X=\P^{2n}$ and $\Pi$ a toric Poisson structure,
this is due on the set-theoretic level to 
Lima- Pereira \cite{lima-pereira}. Here we give
an extension of the Lima-Pereira result to the toric case: 
we will determine the deformations of 2-general log-symplectic
toric Poisson structures
on smooth projective toric varieties,
showing in particular that they remain toric (cf. Corollary \ref{toric}).
\par
 The local triviality conclusion is surprising
because it is decidedly false in dimension 2. Though in \cite{qsymplectic}
we proved an analogous local triviality result for deformations
of P-normal 
Poisson structures, 
the latter certainly does hold in dimension 2 (where the degenracy locus
must be smooth).
While the 2-general position- or something like it- seems necessary
for local triviality, it's unclear whether it is necessary for mere
unobstructedness. In the case of Poisson surfaces, i.e. surfaces endowed with an
effective anticanonical divisor, deformations are unobstructed whenever the divisor is
reduced and has normal crossings, but can be obstructed
when the divisor is non-reduced (see \cite{qsymplectic}).
\par
The strategy for the proof is simple. Poisson deformations are 'controlled'
by the Poisson-Schouten \dgla (dgla)
$(T^\bullet_X, [\ .\ ,\Pi])$. As shown in \cite{qsymplectic}, 
using only that $D$ has normal crossings,
the sub-dgla
$T^\bullet_X\mlog{D}$ has unobstructed deformations thanks
to its isomorphism with the dgla of log forms 
$(\Omega^\bullet_X\llog{D}, d)$. 
We extend this isomorphism to
an isomorphism between $T^\bullet_X$ and a certain dgla of 
meromorphic forms denoted $\Omega^\bullet_X\llogp{D}$,
the 'log-plus forms', 
which contains the log forms. We identify the quotient complex
$\Omega^\bullet_X\llogp{D}/\Omega^\bullet_X\llog{D}$ and prove-
under the 2-general position hypothesis-
that it is exact is  degrees $\leq 2$.\par
Other unobstructedness results of 'Bogomolov-Tian-Todorov type'
were obtained  by Kontsevich et al. 
\cite{kontsevich-gen-tt}, \cite{ka-kontsevich-pa}. Earlier, Hitchin \cite{hitchin-poisson}
and Fiorenza-Manetti \cite{fiorenza-manetti} had proven unobsrtuctedness for certain
deformation directions, namely, those corresponding to the K\"ahler class itself. 
Also, Pym \cite{pym} has introduced the notion of elliptic Poisson structures
and more generally, Pym and Schedler \cite{pym-schedler} then introduced the notion of \emph{holonomic}
Poisson manifolds, where the degeneracy divisor is reduced but not necessarily
normal crossings, and have studies their deformations focusing on
questions of local finite dimensionality. \par
The referee points out the work
of M\v arcut- Osorno Torres \cite{marcut-ot} in the real case, which considers
deformations of real  $C^\infty$ Poisson structures with \emph{smooth} degeneracy locus
(of real codimension 1) . Though the setting is different, their
methods, especially in their \S 3,
 are somewhat related to those of our \S 2 below. The analogue of their
 quasi-isomorphism result (Lemma 3) also holds in the holomorphic case
 (assuming smooth degeneracy divisor).\par 
In this paper we work in the holomorphic category and use the complex topology unless otherwise mentioned.
\subsection*{Acknowledgment} I am grateful to the IHES for its hospitality 
(and peerless ambiance) during preparation of this paper.
 Thanks are due to
the referee for helpful and interesting comments.
\section{Basics}
See \cite{dufour-zung} or \cite{ciccoli}  or \cite{pym-constructions} for basic facts on 
Poisson manifolds and \cite{ginzburg-kaledin} (especially the appendix),
\cite{goto-rozanski-witten},
or \cite{namikawa}, and references therein
 for information on deformations of
Poisson complex structures.\par
\subsection{log vector fields, duality}
Consider a log-symplectic $2n$-dimensional Poisson manifold $(X, \Pi)$ with degeneracy or Pfaffian divisor
$D=[\Pi^n]$. We denote by $\pisharp$  the interior multiplication map
\[\pisharp:\Omega^1_X\to T_X.\]
As $\pisharp$ is generically an isomorphism, it extends to an isomorphism on
meromorphic differentials. 
The following result
is apparently well known and was stated in 
\cite{qsymplectic}, Lemma 6 under the needlessly strong hypothesis
that $\Pi$ is P-normal.
\begin{lem}\label{pisharp} 
If $D$ has local normal crossings, then $\pisharp$ yields an isomorphism
\eqspl{pisharp}{\pisharp:\Omega^1_X\llog{D}\to T_X\mlog{D}.}
\end{lem}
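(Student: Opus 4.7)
The statement is local, so I fix $p \in D$ and local coordinates $x_1,\ldots,x_{2n}$ in which $D=\{x_1\cdots x_k=0\}$. Both $\Omega^1_X\llog{D}$ and $T_X\mlog{D}$ are locally free of rank $2n$, with bases $\{dx_i/x_i\}_{i\le k}\cup\{dx_j\}_{j>k}$ and $\{x_i\partial_i\}_{i\le k}\cup\{\partial_j\}_{j>k}$ respectively. Writing $\Pi=\tfrac{1}{2}\sum\pi^{ij}\partial_i\wedge\partial_j$ with matrix $M=(\pi^{ij})$, the log-symplectic hypothesis translates to $\mathrm{Pf}(M)=c\,x_1\cdots x_k$ for some unit $c$, equivalently $\Pi^n=n!\,c\,x_1\cdots x_k\,\partial_1\wedge\cdots\wedge\partial_{2n}$. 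The plan is to show that the matrix of $\pisharp$ in these bases has holomorphic entries of unit determinant.

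The crux is the divisibility $\pi^{ij}\in(x_l)$ whenever $l\in\{i,j\}$ and $l\le k$, which simultaneously implies that $\pisharp$ sends log forms to log vector fields and controls the determinant. To establish this I will show that each branch $D_l=\{x_l=0\}$ with $l\le k$ is itself a Poisson subvariety. Since $\mathcal L_{X_f}\Pi=0$ for every $f$, Leibniz yields $\mathcal L_{X_f}\Pi^n=0$; combined with the identity $\mathcal L_{X_f}(\partial_1\wedge\cdots\wedge\partial_{2n})=-\mathrm{div}(X_f)\cdot\partial_1\wedge\cdots\wedge\partial_{2n}$ this becomes $X_f(\mathrm{Pf}(M))=\mathrm{Pf}(M)\cdot\mathrm{div}(X_f)$, so $X_f$ preserves the ideal $(x_1\cdots x_k)$ and $D$ itself is a Poisson subvariety. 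Now expand
\[
X_f(x_1\cdots x_k)=\sum_{l=1}^k(x_1\cdots\widehat{x_l}\cdots x_k)\,X_f(x_l)\in(x_1\cdots x_k),
\]
and reduce modulo the prime $(x_l)$: every summand with index $\neq l$ vanishes, while $x_1\cdots\widehat{x_l}\cdots x_k$ is a nonzerodivisor in the regular integral domain $R/(x_l)$, so cancellation gives $X_f(x_l)\in(x_l)$. Thus each $D_l$ is Poisson, and specializing $f=x_j$ yields $\pi^{lj}=\{x_l,x_j\}\in(x_l)$ for every $j$.

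Antisymmetry of $M$ then gives $\pi^{ij}\in(x_i)\cap(x_j)=(x_ix_j)$ whenever $i\neq j$ are both $\le k$, and $\pi^{ij}\in(x_i)$ whenever $i\le k<j$. The matrix $\widetilde M$ of $\pisharp$ in the log bases therefore has holomorphic entries, and its determinant transforms as $\det\widetilde M=\det M/(x_1\cdots x_k)^2=\mathrm{Pf}(M)^2/(x_1\cdots x_k)^2=c^2$, a unit. Hence $\pisharp$ restricts to an isomorphism $\Omega^1_X\llog{D}\xrightarrow{\sim}T_X\mlog{D}$.

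The main obstacle is extracting Poisson-ness of the individual branches $D_l$ from that of the total divisor; the key mechanism is the primary decomposition of the principal ideal $(x_1\cdots x_k)$ into the $(x_l)$, which allows reduction modulo one prime at a time to isolate each divisibility condition.
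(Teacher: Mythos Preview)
Your proof is correct, and it takes a genuinely different route from the paper's argument. The paper proceeds by a soft codimension-two extension: off $D$ the map is obviously an isomorphism; at smooth points of $D$ one invokes Weinstein's splitting theorem to reduce to the surface case and check directly; hence $\pisharp$ is a morphism of locally free sheaves of equal rank which is an isomorphism away from the codimension-$\geq 2$ locus $\sing(D)$, so its determinant is a unit off codimension two and therefore everywhere. You instead work at an arbitrary point of $D$, including the deep strata, and argue computationally: from $\mathcal L_{X_f}\Pi^n=0$ you deduce that the total divisor $D$ is a Poisson subvariety, then use the primary decomposition of $(x_1\cdots x_k)$ to peel off one branch at a time and conclude that each $D_l$ is Poisson, giving the divisibilities $\pi^{ij}\in(x_l)$ for $l\in\{i,j\}\cap\{1,\dots,k\}$; from there the log-basis matrix is visibly holomorphic with determinant $c^2$. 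The paper's argument is shorter and avoids any matrix manipulation, but imports Weinstein's normal form as a black box and sweeps the Hartogs-type extension of the map across $\sing(D)$ under the rug. Your argument is self-contained, works uniformly at every stratum of $D$, and yields the extra structural fact that every branch of $D$ is itself a Poisson hypersurface, which is of independent interest.
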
\label{loc-trivial}
\begin{proof}
The assertion is obvious at points where $\Pi$ is nondegenerate.
As smooth points of $D$, it follows easily from Weinstein's normal form.
Thus, $\pisharp$ is a morphism of locally free sheaves on $X$ which
is an isomorphism off a codimension-2 subset, viz. $\sing(D)$.
Therefore $\pisharp$ is an isomorphism.
\end{proof}
\subsection{log deformations}
Note that $\Pi$ can be viewed as a section of 
\[T^2_X\mlog{D}:=\wedge^2T_X\mlog{D},\] i.e.
as "Poisson structure for the Lie algebra sheaf $T_X\mlog{D}$"; one
can think of the Lemma as saying that $\Pi$ is \emph{nondegenerate
as such}. 
Note that a $T_X\mlog{D}$-deformation, say
over a local artin algebra $R$, consists of a flat 
deformation of $X, \tilde X/R$ plus
a compatible locally trivial deformation of $D$, $\tilde D/R$.
On the other hand a $T^\bullet_X\mlog{D}$-deformation over
 $R$, consists of a flat 
deformation of $X, \tilde X/R$,
a compatible locally trivial deformation of $D$, $\tilde D/R$,
plus a compatible deformation $\tilde\Pi$ of $\Pi$, that is,  a section of
$T^2_{\tilde X}\mlog{\tilde D}$ extending $\Pi$. In that case clearly
the degeneracy locus $D(\tilde \Pi)=\tilde D$. 
Thus the natural forgetful map 
\[T^\bullet_X\mlog{D}\to T_X\mlog{D}\]
corresponds to mapping
\[\tilde\Pi\to D(\tilde\Pi).\]
We record this for future
reference:
\begin{lem}Let $(X, \Pi)$ be a log-symplectic manifold with degeneracy 
locus $D=D(\Pi)$ and let $R$ be a local Artin algebra. Then an $R$-vaued $T^\bullet_X\mlog{D}$-deformation induces\par (i) a flat deformation
$\tilde X/R$;\par
(ii) an $R$-linear extension $\tilde\Pi$ of $\Pi$ to $\tilde X$,
such that the degeneracy locus $D(\tilde\Pi)$ is a locally trivial
deformation of $D$.
\end{lem}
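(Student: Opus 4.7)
The plan is to unpack an $R$-valued $T^\bullet_X\mlog{D}$-deformation as a Maurer--Cartan element in the controlling dgla and read off each claim. Concretely, such a deformation corresponds to a pair $(\phi_1,\phi_2)$ with $\phi_1\in T_X\mlog{D}\otimes\mathfrak m_R$ and $\phi_2\in\wedge^2 T_X\mlog{D}\otimes\mathfrak m_R$, satisfying the Maurer--Cartan equation for the Schouten bracket twisted by $\Pi$.

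First, $\phi_1$ alone is a Maurer--Cartan element for the sheaf of Lie algebras $T_X\mlog{D}$, so by classical logarithmic Kodaira--Spencer theory it integrates to a flat deformation $\tilde X/R$ carrying a compatible locally trivial deformation $\tilde D\subset\tilde X$ of $D$. This handles (i) and produces the locally trivial $\tilde D/R$ that will appear in (ii). The bivector part $\phi_2$ yields
\[ \tilde\Pi = \Pi + \phi_2 \in H^0\bigl(\tilde X, \wedge^2 T_{\tilde X}\mlog{\tilde D}\bigr), \]
and the remaining Maurer--Cartan relations $[\tilde\Pi,\tilde\Pi]=0$ are exactly the Jacobi identity, so $\tilde\Pi$ is a holomorphic Poisson deformation of $\Pi$.

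The core step is to show that the intrinsic Pfaffian divisor $D(\tilde\Pi)\subset\tilde X$ coincides with $\tilde D$. For this I would invoke Lemma \ref{pisharp}: the contraction $\pisharp:\Omega^1_X\llog{D}\to T_X\mlog{D}$ is an isomorphism of locally free $\O_X$-modules. The formation of $\pisharp$ is $R$-linear, so over $\tilde X$ we obtain $\tilde\pisharp:\Omega^1_{\tilde X}\llog{\tilde D}\to T_{\tilde X}\mlog{\tilde D}$ between locally free $\O_{\tilde X}$-modules of the same rank. Since $R$ is Artin local and the central reduction $\pisharp$ is an isomorphism, Nakayama's lemma forces $\tilde\pisharp$ to be an isomorphism as well. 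Equivalently, $\tilde\Pi^n$ is a local generator of the invertible sheaf $\wedge^{2n} T_{\tilde X}\mlog{\tilde D}$. Under the canonical inclusion $\wedge^{2n} T_{\tilde X}\mlog{\tilde D}\hookrightarrow \wedge^{2n} T_{\tilde X}$, whose image is the subsheaf of sections vanishing to first order along each component of $\tilde D$, this says that the zero divisor of $\tilde\Pi^n$, viewed intrinsically on $\tilde X$, is precisely $\tilde D$. Thus $D(\tilde\Pi)=\tilde D$, which is a locally trivial deformation of $D$ as required.

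The main obstacle I anticipate is this last identification, namely reconciling the two a priori distinct notions of ``degeneracy divisor of the deformed bivector'': one coming from $\phi_1$ (the locally trivial $\tilde D$), the other computed intrinsically from $\tilde\Pi^n$ inside $\wedge^{2n}T_{\tilde X}$. The Nakayama openness of ``being an isomorphism'' applied to $\tilde\pisharp$ is what reconciles them; in essence, this is the statement that the log-symplectic condition is open in Poisson deformations that preserve the logarithmic structure along a fixed normal-crossing divisor.
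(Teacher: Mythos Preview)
Your proposal is correct and follows essentially the same approach as the paper: interpret a $T^\bullet_X\mlog{D}$-deformation as a flat deformation $\tilde X/R$, a locally trivial deformation $\tilde D/R$, plus a compatible section $\tilde\Pi$ of $T^2_{\tilde X}\mlog{\tilde D}$, and then identify $D(\tilde\Pi)$ with $\tilde D$. The paper simply asserts $D(\tilde\Pi)=\tilde D$ as ``clear'' in the discussion preceding the lemma, whereas you supply a genuine justification via Nakayama applied to $\tilde\pisharp$ (reducing to Lemma~\ref{pisharp} over the residue field) together with the identification $\wedge^{2n}T_{\tilde X}\mlog{\tilde D}\simeq(\wedge^{2n}T_{\tilde X})(-\tilde D)$; this is exactly the right way to make the ``clearly'' rigorous.
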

\begin{rem}
Any deformation of $(X, \Pi)$ induces a deformation of $D(\Pi)$ but the 
latter deformation need not be locally trivial in general (this is already clear when $X$ is 2-dimensional,
so $\Pi$ corresponds to an effective anticanonical divisor which can be singular and deform to
a nonsingular one).
\end{rem}
It also follows from Lemma \ref{pisharp} , and was also noted in
 \cite{qsymplectic},  that $\pisharp$ extends to an isomorphism
of differential graded algebras
\eqspl{pisharp-graded}{\pisharp:(\Omega^\bullet_X\llog{D}, d)\to (T^\bullet_X\mlog{D}, [.,\Pi]).}
We will denote the graded  isomorphism inverse to $\pisharp$ by $\piflat$.
Working locally in a neighborhood of a point, let $D_1,..., D_m$
be the components of $D$, with respective local equations
$x_1,..., x_m$ which are part of a local coordinate system $x_1,...,x_{2n}$.
Thus, denoting $\del_i=\del/\del x_i$, we have a local basis for $T_X\mlog{D}$ of the form
\[v_i:=\begin{cases}x_i\del_i, \  i=1,...,m\\
\del_{i},\  i=m+1, ...,2n,
\end{cases}\]
with the dual basis
\[\eta_i:=\begin{cases}dx_i/x_i,\ i=1,...,m,\\
dx_{i}, \  i=m+1, ..., 2n\end{cases}\]
being a local basis for $\Omega^1_X\llog{D}$. Let $A=(a_{ij})$
be the matrix of $\pisharp$ with respect to these
bases and $B=(b_{ij})=A\inv$. Thus
$A$ and $B$ are skew-symmetric.
Then we have
\[\Pi=\sum\limits_{1\leq i<j\leq 2n}a_{ij}v_iv_j,\]
\[\Phi:=\Pi\inv=\sum\limits_{1\leq i<j\leq 2n}b_{ij}\eta_i\eta_j.\]
Note that the map $\piflat$ is just interior multiplication by $\Phi$.\par
\subsection{General position}\label{general-position-sec}
A pair of $k\times k$ matrices $M, N$ with entries in a 
ring $R$ are said to be in
$t$-\emph{relative general position} if every collection of $t$ columns
of  $M$, together with the corresponding columns of $N$,
 generate a free and cofree rank-$2t$ submodule of $R^k$;
equivalently, for every such 'matched' collection of $2t$ columns
from $M$ and $N$,  there exists a collection of $2t$
rows such that the corresponding $2t\times 2t$ minor is a unit
(we will henceforth abbreviate the latter conclusion by simply saying
that the columns in question are linearly independent).
Clearly this condition is never satisfied if $2t>k$ while if $2t\leq k$
it is satisfied if $N$ is nonsingular and $M$ is general relative to $N$.
$M$ is said to be in (standard) \emph{$t$-general position}
if $M, I_k$ are in relative $t$-general position. 
%Note that if $M$ is skew-symmetric ($k$ even), it cannot be in (standard) $k$-general
%position because the first $k-1$ columns of $M$  together with the standard vector $e_k$
%make a singular matrix.\par
The log-symplectic
structure $\Pi$ as above is said to be in $t$-general position
if, locally at each point of the degeneracy locus, there is a 
local coordinate system as above such that
 the matrix $A$ above is in (standard) $t$-general position. 
In general, 
%$\Pi$ on $X$ of dimension $2n$ can 
%never be in $(2n)$-general position
%since $A$ is skew-symmetric, but 
if $A$ is 'generic', 
$\Pi$ will be in $n$-general position.
On the other hand, if $(X, \Pi)$ decomposes as a product structure with a factor of
dimension $2k$ then $\Pi$ is not in $(2k)$-general position.

%*****************
%Our first observation is the following.
%\begin{lem}\label{poles}
%For each $i=1,...,m$, the polar locus of $\piflat(x_i\del x_i)$ equals
%$\bigcup\limits_{j\neq i}D_j$ exactly.
%\end{lem}
%\begin{proof}
%Arguing by contradiction, suppose $\piflat(x_1\del x_1)$ has no pole on $D_2$.
%Then at a genral point of $D_1\cap D_2$, $\pisharp(dx_2/x_2)$ is not even holomorphic,
%much less a log vector field.
%\end{proof}
%This result can be interpreted in terms of the local coordinate expression for $\Pi$.
%For simplicity we state this locally at a point where $\Pi$ has corank $2n$, i.e. vanishes. Then
%we can write
%\[\Pi=\sum\limits_{i<j}a_{ij}x_ix_j\del x_i\del x_j\]
%where $A=(a_{ij})$ is a symmetric matrix of holomorphic functions (with zeros on the diagonal).
%Of course the implicit product of vector fields above is wedge prouct.
%Note that $A$ is the matrix of the map $\pisharp$ with respect to the standard
%bases $(dx_i/x_i), (x_j\del x_j)$ of its source and target. Hence $A$ is nonsingular.
%Let $B=(b_{ij})$ be the inverse martix. The Lemma \ref{poles} above means
%\begin{cor}
%Notations as above, $b_{ij}$ is a unit for each $i\neq j$.\qed
%\end{cor}
%\begin{rem}
%Another way to see this is to note that the meromorphic, generically 
%symplectic 2-form corresponding to $\Pi$ is
%\eqspl{}{
%\Phi=\Pi\inv=\sum b_{ij}\frac{dx_idx_j}{x_ix_j}.
%}
%The map $\Pi^\flat$ is simply given by interior multiplication by $\Phi$:
%\[\Pi^\flat(v)=\carets{\Phi, v}.\]
%\end{rem}
\section{The Log-plus complex}
\subsection{Definition of log-plus complex}
Notations as above, we also denote by $\Omega^1_X(*D)$ the sheaf of meromorphic forms
with arbitrary-order pole along $D$:
\[\Omega^1_X(*D)=\bigcup\limits_{k=1}^\infty \Omega^1_X(kD).\]
Then $\piflat$ extends to an injective map $T_X\to\Omega^1_X(*D)$, whose
image we denote by $\Omega^1_{X, \Pi}\llogp{D}$. 
As subsheaf of $\Omega^1_X(*D)$, this sheaf depends on $\Pi$, but we will often abuse
notation and denote it simply by $\Omega^1_X\llogp{D}$.
Thus, $\piflat$ yields an isomorphism
\[\piflat:T_X\to\Omega^1_X\llogp{D}.\]
Also set
\[\Omega^k_X\llogp{D}=\wedge^k\Omega^1_X\llogp{D}.\]
Note that the isomorphism of complexes
\[\piflat:(\wedge^\bullet T_X\mlog{D}, [\ .\ ,\Pi])\to(\Omega_X^\bullet\llog{D}, d)\]
now extends to an isomprhism of complexes
\eqspl{}{
\piflat:(\wedge^\bullet T_X, [\ .\ ,\Pi])\to(\Omega_X^\bullet\llogp{D}, d).
}
The log-plus complex has appeared before, in a different context, in
the work of Polishchuk \cite{polishchuk-ag-poisson}.\par
\subsection{Comparison with log complex}
Denote by $\Omega^1_X\llogpp{D}$ the subsheaf of $\Omega^1_X(*D)$, 
generated by $\Omega^1_X\llog{D}$ and $\eta_j/x_i, i\leq m, j\neq i$.
Then using, e.g. \eqref{pisharp}, note that $\Omega^1_X\llogp{D}$ is a 
subsheaf of
% the subsheaf of $\Omega^1_X(*D)$, denoted
  $\Omega^1_X\llogpp{D}$.
%generated by $\Omega^1_X\llog{D}$ and $\eta_j/x_i, i\leq m, j\neq i$. The latter depends only on $D$ and not
%on $\Pi$. 
Also set
\[\Omega^k_X\llogpp{D}=\wedge^k\Omega^1_X\llogpp{D}.\]
Now let $(\phi_\bullet)=\piflat(\del_\bullet)$ be the local basis of
$\Omega^1_X\llogp{D}$ corresponding to the basis $\del_1,...,\del_{2n}$
of $T_X$. Thus
\[\phi_i=\sum\limits_{j=1}^m b_{ij}dx_j/x_ix_j+\sum\limits_{j=m+1}^{2n}b_{ij}dx_j/x_i, i=1,...,m\]
while $\phi_{m+1},...,\phi_{2n}\in\Omega^1_X\llog{D}$. Note that for $i=1,...,m$, $x_i\phi_i$
corresponds to the log vector field $v_i=x_i\del_i$, which is automatically
locally Hamiltonian off the degeneracy locus, hence $x_i\phi_i$ is a closed form. Therefore
\[d\phi_i=\phi_i\wedge dx_i/x_i, i=1,...,m.\]
More generally, for a multi-index $I=(i_1<...<i_k)$, setting
\[\phi_I=\phi_{i_1}\wedge...\wedge\phi_{i_k},\]
we have
\eqspl{dphi}{
d\phi_I=\sum\limits_{j=1}^k((-1)^jdx_{i_j}/x_{i_j})\phi_I.
}
This implies immediately that $(\Omega^\bullet_X\llogp{D},d)$ is a complex,
i.e. that $d\Omega^k_X\llogp{D}\subset \Omega_X^{k+1}\llogp{D}$.
Moreover, $\Omega^\bullet_X\llogp{D}$ admits an ascending filtration
$\mathcal F_\bullet$ by subcomplexes (and $\Omega^\bullet_X\llog{D}$- modules under
exterior product) defined by:\par
 - $\mathcal F_0=\Omega^\bullet_X\llog{D}$;\par
- for each $i\geq 0, \mathcal F_{i+1}$ is generated by $\mathcal F_i$ and $\Omega^1_X\llogp{D}\mathcal F_i$.\par
Thus, $\mathcal F_i$ is generated over $\Omega_X^\bullet\llog{D}$
by $\Omega_X^j\llogp{D}, j=1,...,i$. In particular, the quotient $\mathcal G_i:=\mathcal F_i/\mathcal F_{i-1}$
is generated over $\Omega^\bullet_X\llog{D}$ by the images of the  $\phi_I, |I|=i$, which we denote by
$\bar\phi_I$. Note that
$\Ann(\bar\phi_I)=(x_r:r\in I)$. Let $D_I$ be the corresponding stratum. Then $\mathcal G_i$
decomposes at a direct sum of complexes of locally free $\O_{D_I}$-modules: 
\eqspl{}{
\mathcal G_i=\bigoplus\limits_{|I|=i}\bar\phi_I\Omega^\bullet_X\llog{D}[-i]\otimes\O_{D_I}.
} The directness of the sum follows from the correponding
assertion for $\wedge^\bullet T_X\mlog{D}$, which is obvious.\par
We denote the summand above that corresponds to $I$ by $Q^\bullet_I$,
a complex in degrees $\geq i$.\par
%****\par
% In fact, 
%\[Q^{|I|+1}_I=\phi_I\Omega^1_X\llog{D}\simeq \Omega^1_X\llog{D}\otimes D_I.\]
%The latter is isomorphic to \[\Omega^1_{D_I}\llog {\bar D_I}\oplus\bigoplus\limits_{r\in I}(dx_r/x_r)\O_{D_I}\]
%where $\bar D_I=\sum\limits_{j\not\in I}D_j.{D_I}$. *****\par
Note that
\[\Omega^1_X\llog{D}\otimes\O_{D_I}=\Omega^1_{D_I}\llog {\bar D_I}\oplus\bigoplus\limits_{r\in I}(dx_r/x_r)\O_{D_I}\]
where $\bar D_I$ is the normal-crossing divisor on$D_I$ induced by $D$ (i.e. by the components of $D$
not containing $D_I$).
This decomposition  induces an analogous one on
the exterior power  $\Omega^k_X\llog{D}\otimes\O_{D_I}$
We now introduce the hypothesis that $\Pi$ is in
$t$-general position. This implies that
for any index-set $I\subset \ul m$ with  $|I|\leq t$, 
any set of $2t$ log-forms among $\{dx_i/ x_i,x_i \phi_i: \in I\}$ is
linearly independent. 
Equivalently, the $x_i\phi_i, i\in I$
pull back to a collection of nonvanishing linearly independent log forms on $D_I$
provided $|I|\leq t$. 
Note also that
\[(\sum\limits_{r=1}^m\O_X\phi_r)\cap\Omega_X^1\llog{D}=\sum\limits_{r=1}^m\O_Xx_i\phi_r\]
(just apply the isomorphism $\pisharp$ to both sides).
Consequently, we have,
\emph{provided $\Pi$ is in $t$-general position}, that
\eqspl{}{
Q_I^{|I|+k}\simeq\bigoplus\limits_{j=0}^k\Omega^{k-j}_{D_I}\llog{\bar D_I}\left(\bigoplus\limits_{\stackrel{|J|=j}{J\subset I}}d\log(x)_J\right),
|I|+k\leq t,}
where
\[d\log(x)_J:=\wedge_{i\in J}dx_i/x_i.\]
$ t$-general position is needed to insure that any collection of $t$ distinct $dx_j/x_j$-s and $x_i\phi_i$-s is independent,
i.e. spans a free and cofree submodule of $\Omega^1_X\llog{D}$. 
and consequently for every $d\log(x)_J$ appearing above,
$d\log(x)_J\wedge\phi_I $ is nonvanishing.
Note that in terms of the above identification, the differential on $\Q^\bullet_I$ is given by
\eqspl{}{
d(\psi_Jd\log(x)_J)=(d\psi_J)d\log(x)_J+\sum\limits_{i\in I\setminus J}\psi_J(-1)^id\log(x)_Jdx_i/x_i.
} Of course $d\log(x)_J$ and $d\log(x)_Jdx_i/x_i$ are just place holders,
so dropping these, we get more concretely the formula for the 'twisted'
differential  is
\eqspl{}{
\tilde d(\psi_J)=(d\psi_J, -\psi_J)_{i\in I\setminus J}.
} Thus, for elements $\psi_J$ in the component of type $(k, J)$
(so that $\psi_J$ is a differential of degree $k-|J|$, the differential
has components of type $(k+1, J)$ (namely $d\psi_J$) and type
$(k+1, J\cup\{i\})$ (namely $\pm\psi_J$), for all
$i\in I\setminus J$, and they are given as above.\par
Note that such a complex is automatically exact except
if it consists of a single group: a cocycle
of the form $(\psi, \gamma)$ such that $\psi=d\gamma$ equals
$\tilde d(\pm \gamma. 0)$. Consequently, we conclude
\begin{lem}
If $\Pi$ is in $r$-general position, then for each $I$, $\Q_I^\bullet$
is exact in degrees $\leq r$.
\end{lem}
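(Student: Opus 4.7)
The plan is to identify $(Q_I^\bullet, \tilde d)$, in the range where the direct-sum decomposition is valid, with the total complex of a tensor product of the log de Rham complex on $D_I$ against an acyclic Koszul-type factor on a free module, and then to write down an explicit contracting homotopy. Under $r$-general position the decomposition exhibited above rewrites, for $|I|+k\leq r$,
\[
Q_I^{|I|+k} \;\simeq\; \bigoplus_{j=0}^{k} \Omega^{k-j}_{D_I}\llog{\bar D_I} \otimes_{\O_{D_I}} \Lambda^j V,
\]
where $V$ is the free rank-$|I|$ $\O_{D_I}$-module on the formal symbols $\{d\log(x)_r : r\in I\}$. Collecting over $k$ presents $Q_I^{\bullet+|I|}$ as the graded tensor product $\Omega^\bullet_{D_I}\llog{\bar D_I}\otimes \Lambda^\bullet V$, and unwinding the formula displayed in the text for $\tilde d$ yields the standard tensor-product differential
\[
\tilde d(\alpha\otimes\beta) \;=\; d\alpha\otimes\beta + (-1)^{|\alpha|}\,\alpha\otimes(\xi\wedge\beta),
\]
where $\xi=\sum_{j}(-1)^{j}\,d\log(x)_{i_j}\in V$ (for $I=\{i_1<\dots<i_{|I|}\}$) is a fixed element whose coefficients are units.

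Since $|I|\geq 1$ and $\xi$ has unit coefficients, one may choose a dual $\xi^*\in V^*$ with $\langle\xi^*,\xi\rangle=1$ (take the coordinate projection onto any one summand on which $\xi$ has nonzero component). The contraction $\iota_{\xi^*}$ then satisfies the standard Koszul identity $\iota_{\xi^*}(\xi\wedge\beta)+\xi\wedge\iota_{\xi^*}(\beta)=\beta$, so the exterior Koszul complex $(\Lambda^\bullet V,\,\xi\wedge)$ is null-homotopic. Extending through the tensor product by $H(\alpha\otimes\beta)=(-1)^{|\alpha|}\alpha\otimes\iota_{\xi^*}(\beta)$, a direct computation gives $\tilde d H+H\tilde d=\mathrm{id}$: the $d\alpha\otimes\iota_{\xi^*}\beta$ cross terms cancel and the surviving piece is $\alpha\otimes(\xi\wedge\iota_{\xi^*}\beta+\iota_{\xi^*}(\xi\wedge\beta))=\alpha\otimes\beta$. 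This proves that $Q_I^\bullet$ is chain-contractible, hence exact, throughout the range of degrees on which the decomposition holds.

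The one delicate point is exactness at the top degree $n=r$, where the target $Q_I^{r+1}$ lies outside the scope of the decomposition and the formal tensor product surjects onto $Q_I^{r+1}$ with possibly nontrivial kernel. This is precisely the boundary case flagged in the remark preceding the lemma: a cocycle at the top degree specializes to a pair $(\psi,\gamma)$ with $\psi=d\gamma$, and such an element is visibly the coboundary $\tilde d(\pm\gamma,0)$---this is nothing more than the degree-zero specialization of the Koszul contraction $H$ above. I expect the principal bookkeeping obstacle to be matching the global $(-1)^{i_j}$ signs appearing in the formula for $\tilde d$ against the standard tensor-product signs; but since $\tilde d^2=0$ is already established, the signs are forced and the contracting homotopy survives unchanged.
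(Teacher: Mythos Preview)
Your proof is correct and is essentially the paper's argument, only written out in full using standard Koszul/tensor-product language. The paper's one-sentence proof---``a cocycle of the form $(\psi,\gamma)$ with $\psi=d\gamma$ equals $\tilde d(\pm\gamma,0)$''---is exactly your contracting homotopy $H=\iota_{\xi^*}$, specialised to the two-term splitting of $\{J\subset I\}$ by whether a fixed index $i_0\in I$ lies in $J$; this exhibits $Q_I^\bullet$ as the cone of an isomorphism. Your tensor-product identification and explicit verification of $\tilde dH+H\tilde d=\mathrm{id}$ make the mechanism transparent, and your flag of the boundary issue at degree $r$ is a point the paper passes over in silence.
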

As an immdiate consequence, we conclude
\begin{cor}\label{qis}
If $\Pi$ is in $r$-general position, then the natural
map
\[\mathbb H^i(\Omega_X^\bullet\llog{D})\to 
\mathbb H^i(\Omega_X^\bullet\llogp{D})\]
is an isomorphism for $i\leq r$.
\end{cor}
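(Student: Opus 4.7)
The plan is to reduce the statement to the preceding Lemma via the filtration $\mathcal F_\bullet$ together with a standard hypercohomology argument. The key observation is that the natural map in question is simply the inclusion $\mathcal F_0 \hookrightarrow \Omega^\bullet_X\llogp{D}$, since $\mathcal F_0 = \Omega^\bullet_X\llog{D}$, and the filtration is bounded: locally $\mathcal F_m = \Omega^\bullet_X\llogp{D}$, where $m$ is the number of components of $D$ through a given point, and globally the filtration is bounded above in terms of $\dim X$.

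For each step the short exact sequence of complexes
\[
0 \to \mathcal F_{i-1} \to \mathcal F_i \to \mathcal F_i/\mathcal F_{i-1} \to 0
\]
has associated graded $\mathcal F_i/\mathcal F_{i-1} \simeq \bigoplus_{|I|=i} Q^\bullet_I$. By the preceding Lemma, under $r$-general position every $Q^\bullet_I$ has vanishing cohomology sheaves $\mathcal H^j(Q^\bullet_I) = 0$ for $j \leq r$. The second hypercohomology spectral sequence $E_2^{p,q} = H^p(X, \mathcal H^q(Q^\bullet_I)) \Rightarrow \mathbb H^{p+q}(Q^\bullet_I)$ therefore has $E_2^{p,q} = 0$ whenever $q \leq r$, forcing $\mathbb H^n(Q^\bullet_I) = 0$ for $n \leq r$, and hence $\mathbb H^n(\mathcal F_i/\mathcal F_{i-1}) = 0$ in that range.

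The conclusion now follows by induction on $i$. The long exact sequence in hypercohomology reads
\[
\mathbb H^{j-1}(\mathcal F_i/\mathcal F_{i-1}) \to \mathbb H^j(\mathcal F_{i-1}) \to \mathbb H^j(\mathcal F_i) \to \mathbb H^j(\mathcal F_i/\mathcal F_{i-1}),
\]
and the outer terms vanish for $j \leq r$ by the previous step, so the middle arrow is an isomorphism in that range. Concatenating these isomorphisms yields $\mathbb H^j(\Omega^\bullet_X\llog{D}) = \mathbb H^j(\mathcal F_0) \simeq \mathbb H^j(\mathcal F_m) = \mathbb H^j(\Omega^\bullet_X\llogp{D})$ for $j \leq r$, which is the claim.

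The substantive work was really embedded in the preceding Lemma; once that local exactness statement is in hand, the corollary is essentially bookkeeping. The only mild concern is the global definition and boundedness of $\mathcal F_\bullet$, but $\mathcal F_i$ is characterized intrinsically as the $\Omega^\bullet_X\llog{D}$-submodule of $\Omega^\bullet_X\llogp{D}$ generated by $i$-fold wedges from $\Omega^1_X\llogp{D}$, so there is no compatibility issue when passing between charts.
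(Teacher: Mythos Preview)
Your proposal is correct and is precisely the standard argument the paper has in mind when it says ``as an immediate consequence'': the paper gives no further details beyond that phrase, and your filtration-plus-long-exact-sequence (equivalently spectral sequence) argument is the natural way to unpack it.
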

\begin{rem}
In the real $C^\infty$ case considered in \cite{marcut-ot}, with smooth degeneracy locus, it is shown
in \S 3  that the
analogous map inclusion is a quasi-isomorphism. 
In the holomorphic case (with smooth degeneracy locus), the analogous fact is true as well: the inclusion map from
the log complex to the log-plus complex is a quasi-isomorphism, no general
position hypotheses needed. This follows easily from computations as above, given Weinstein's normal form
\cite{ciccoli}.
\end{rem}
\begin{rem}
A 2-dimensional log-symplectic Poisson structure $\Pi$ with \emph{singular} degeneracy 
divisor $D(\Pi)$ is never in 2-general position and
in fact the complex $\mathcal G_2$ above is not locally acyclic in degree 1 (i.e.
on 2-forms): for the Poisson structure
$\frac{dx_1dx_2}{x_1x_2}$, the 2-form $\frac{dx_1dx_2}{x_1^2x_2^2}$ is closed but not exact
in the log-plus complex.\par
Naturally the same applies whenever the log-symplectic manifold $(X,\Pi)$ splits as a (Poisson) product
with  a 2-dimensional factor $(X_1, \Pi_1)$
such that $D(\Pi_1)$ is singular.
\end{rem}
\section{Main theorem}
\subsection{Main result}
Our main result is the following
\begin{thm}\label{mainthm}
Let $(X, \Pi)$ be a log-symplectic compact K\"ahlerian 
Poisson manifold of dimension
$2n\geq 4$ with degeneracy
divisor $D$. Assume $(X, \Pi)$ is in 2-general position. Then
\par (i) $(X, \Pi)$ has
unobstructed deformations;\par
(ii) the tangent space to the deformation space of
$(X, \Pi)$ coincides with the Hodge-level subspace
$F^1\HH^2(X\setminus D, \C)$;\par
(iii) any deformation of $(X, \Pi)$ induces
a locally trivial deformation of the degeneracy locus $D(\Pi)$.
\end{thm}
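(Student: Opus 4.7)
The plan is to compare the dgla $\mathcal L = (\wedge^{\ge 1}T_X,[\cdot,\Pi])$ controlling deformations of $(X,\Pi)$ with its sub-dgla $\mathcal L_D = (\wedge^{\ge 1}T_X\mlog{D},[\cdot,\Pi])$ controlling deformations of the triple $(X,\Pi,D)$ in which $D$ deforms locally trivially; via $\piflat$, these are respectively isomorphic to $(\Omega^{\ge 1}_X\llogp{D},d)$ and $(\Omega^{\ge 1}_X\llog{D},d)$. The unobstructedness of $\Def_{\mathcal L_D}$ is recalled from \cite{qsymplectic}, and by Deligne's log Hodge theory its tangent space is $\HH^2(\Omega^{\ge 1}_X\llog{D}) = F^1H^2(X\setminus D,\C)$. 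Since the cokernel complex $\Omega^\bullet_X\llogp{D}/\Omega^\bullet_X\llog{D} = \bigoplus_I Q^\bullet_I$ is already concentrated in degrees $\ge 1$, the exactness in degrees $\le 2$ underlying Corollary \ref{qis} applies verbatim to the $(\ge 1)$-truncations, giving an isomorphism $\HH^i(\Omega^{\ge 1}_X\llog{D}) \to \HH^i(\Omega^{\ge 1}_X\llogp{D})$ for $i \le 2$ and identifying the tangent space of $\Def_{\mathcal L}$ with $F^1H^2(X\setminus D,\C)$; this yields (ii).

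All of (i)--(iii) then follow once one shows that the inclusion $\mathcal L_D \hookrightarrow \mathcal L$ induces an isomorphism of formal deformation functors. I would prove this by induction over Artin small extensions $R' \twoheadrightarrow R$ with square-zero kernel $J$, with inductive hypothesis that every $\tilde\Pi \in \Def_{\mathcal L}(R)$ is gauge-equivalent to some $\tilde\Pi_D \in \Def_{\mathcal L_D}(R)$. Using the unobstructedness of $\Def_{\mathcal L_D}$ one lifts $\tilde\Pi_D$ to $\tilde\Pi'_D \in \Def_{\mathcal L_D}(R')$, which yields a lift of $\tilde\Pi$ in $\Def_{\mathcal L}(R')$, and hence smoothness, giving (i). For any other lift $\tilde\Pi' \in \Def_{\mathcal L}(R')$, the Maurer--Cartan equations together with $\mathfrak m J = 0$ force $\delta := \tilde\Pi' - \tilde\Pi'_D \in \Gamma(\wedge^2T_X)\otimes J$ to satisfy $[\Pi,\delta] = 0$; by the tangent isomorphism just established, the class $[\delta]$ comes from $\HH^2(\Omega^{\ge 1}_X\llog{D})\otimes J$, so we can write $\delta = \delta_0 + [\Pi,v]$ with $\delta_0 \in \Gamma(\wedge^2T_X\mlog{D})\otimes J$ and $v \in \Gamma(T_X)\otimes J$. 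The gauge $\exp(v)$ applied to $\tilde\Pi'$ yields an $\mathcal L_D$-deformation (and automatically preserves the reduction mod $J$, since $\mathfrak m J = 0$), propagating the inductive hypothesis. The lemma of Section 1.2 on $T^\bullet_X\mlog{D}$-deformations then gives (iii): the degeneracy divisor $D(\tilde\Pi)$ is locally trivial.

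The main technical obstacle I anticipate is the sheaf-theoretic realization of the gauge step. The cocycle $\delta$ and the correcting field $v$ must be constructed as global hypercohomology classes, which requires working at the level of a fine (Dolbeault) resolution of $\mathcal L$ rather than just the sheaf complex, and then verifying that Corollary \ref{qis} upgrades to the statement that the equation $\delta = \delta_0 + [\Pi,v]$ is solvable at the resolution level with $v$ a genuine global tensor. A minor but not entirely trivial additional point is that the formal gauge action $\exp(v)$ on $\tilde\Pi'$ preserves the inductively fixed reduction over $R$ and actually lands in the log subsheaf; both facts follow from $v \in J$-tensor and $\mathfrak m J = 0$.
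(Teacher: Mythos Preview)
Your proposal is correct and follows essentially the same route as the paper: identify $(T^\bullet_X,[\cdot,\Pi])$ with $(\Omega^\bullet_X\llogp{D},d)$, invoke the known unobstructedness of the log sub-dgla from \cite{qsymplectic}, and use Corollary~\ref{qis} to transfer it. The only difference is that where the paper compresses the transfer into the sentence ``since these obstructions only involve terms of degree $\le 2$'', you unpack it into an explicit Artin-ring induction showing that $\mathcal L_D\hookrightarrow\mathcal L$ induces an isomorphism of deformation functors; this added detail is what actually justifies (iii), which the paper's proof leaves somewhat implicit.
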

\begin{proof}
As is well known, the deformation theory of $(X, \Pi)$ is the deformation 
theory of the \dgla sheaf $(T_X^\bullet, [\ .\ ,\Pi])$. The latter is isomorphic 
as dgla to $(\Omega_X^\bullet\llogp{D},d)$, so it suffices to prove that 
obstructions vanish for the latter. By Corollary \ref{qis}, the inclusion map
\[\Omega_X^\bullet\llog{D}\to\Omega_X^\bullet\llogp{D}\]
induces an isomorphism through degree 2. However, it was already 
observed in \cite{qsymplectic}, based on Hodge theory for the log 
complex and the isomorphism $\pisharp:\Omega^\bullet_X\llog{D}
\to T^\bullet_X\mlog{D}$, that the dgla $\Omega_X^\bullet\llog{D}$ has vanishing
obstructions. Since these obstructions only involve terms
of degree $\leq 2$, it follows that
$\Omega_X^\bullet\llogp{D}$ has vanishing obstructions as well.
This proves (i). Now (ii) follows from basic mixed Hodge theory,
specifically the fact that that the log complex 
of a divisor with normal crossings computes the cohomology
of the complement
\cite{deligne-hodge-ii} (see \cite{grif-schmid}, Sec 5, esp. Prop. 5.14), 
while (iii) follows from the fact observed
above that $T^\bullet_X\mlog{D}$-deformations induce locally trivial
deformations of $D$.
\end{proof}
\begin{cor}
Notations as above, assume additionally that $h^{2,0}_X=0$. Then the 
tangent space to the deformation space of $(X, \Pi)$ coincides with
$H^2(X\setminus D, \C)$.
\begin{proof}
Our assumption implies that $H^2(\Omega^0_X\llog{D})=0$, hence
by Deligne's mixed Hodge theory  as referenced above $F^1H^2(X\setminus D, \C)=H^2(X\setminus D, \C)$.
\end{proof}
\end{cor}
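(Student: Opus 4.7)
The plan is to combine part (ii) of the Main Theorem with the Hodge-to-de~Rham spectral sequence for the logarithmic complex on the pair $(X, D)$. By Deligne \cite{deligne-hodge-ii}, for the open symplectic manifold $U = X\setminus D$, the Hodge filtration on $\HH^\bullet(U, \C)$ is computed by the stupid filtration on $\Omega^\bullet_X\llog{D}$:
\[
F^p\HH^k(U, \C) = \HH^k\bigl(X, \Omega^{\bullet\geq p}_X\llog{D}\bigr),
\]
and the associated spectral sequence with $E_1^{p,q} = H^q(X, \Omega^p_X\llog{D})$ degenerates at $E_1$. From this I would read off
\[
\HH^2(U, \C)/F^1\HH^2(U, \C) \;=\; \mathrm{Gr}^0_F\HH^2(U, \C) \;=\; H^2\bigl(X, \Omega^0_X\llog{D}\bigr) \;=\; H^2(X, \O_X),
\]
using that $\Omega^0_X\llog{D} = \O_X$, since there are no log poles at the level of functions.

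The remaining step is to identify $H^2(X, \O_X)$ with $H^{2,0}_X$. On the compact K\"ahler manifold $X$, Hodge symmetry gives $h^{0,2}_X = h^{2,0}_X$, so the hypothesis $h^{2,0}_X = 0$ forces $H^2(X, \O_X) = 0$. Consequently $F^1\HH^2(U, \C) = \HH^2(U, \C)$, and the corollary follows directly from Main Theorem~(ii). There is no genuine obstacle here; the only point worth emphasizing is that the identification of $\mathrm{Gr}^0_F$ through the logarithmic complex is exactly where the normal-crossing assumption on $D$ built into the log-symplectic setup is used, via the $E_1$-degeneration that Deligne's theory supplies.
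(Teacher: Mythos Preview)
Your argument is correct and follows essentially the same route as the paper's proof: both use Deligne's $E_1$-degeneration for the log complex to identify $\HH^2(U,\C)/F^1\HH^2(U,\C)$ with $H^2(X,\Omega^0_X\llog{D})=H^2(X,\O_X)$, which vanishes by Hodge symmetry and the hypothesis $h^{2,0}_X=0$. You have simply spelled out in more detail what the paper compresses into two clauses.
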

%\subsection{Case pf $\P^{2n}$}
%By way of example, we obtain a slight sharpening of a result of
%Lima and Pereira \cite{lima-pereira} about Poisson 
%structures on projective spaces. For a skew-symmetric $(2n+1)
%\times(2n+1)$ matrix $A=(a_{ij})$, 
%we denote by $\Pi_A$ the Poisson structure
%on $\P^{2n}$ defined by
%\[\Pi_A=\sum a_{ij}x_ix_j\del_{x_i}\del_{x_j}.\]
%This is a toric bivector, invariant under the action of $(\C^*)^{2n}$,
%whose degeneracy locus coicides with the toric bounday
%\[D=(x_0\cdots x_{2n})_0.\]
%\begin{cor}
%If $A$ is in 2-general position and $n\geq 2$, 
%then $(\P^{2n}, \Pi_A)$ has unobstructed
%deformations and they are of the form $(\P^{2n}, \Pi_{A'})$.
%\end{cor}
%Lima and Pereira originally proved this  set-theoretically
% for $A$ generic.
%\par
%Now fix $A$ in 2-general position and set
%\[D=D(\Pi_A), U=\P^{2n}\setminus D.\]
%Note that it follows from Theorem \ref{mainthm} and Deligne's theorem
%\cite{deligne-hodge-ii} about computing cohomology by the log complex,
%plus the fact that $H^2(\O_{\P^{2n}})=0$,  that
%the tangent space to the deformation space of $(\P^{2n}, \Pi_A), n\geq 2$
%can be identified with $H^2(U), \C)$.
%Because 
%\[U\simeq (\C^*)^{2n}\sim_{\mathrm
%{homotopy}}(S^1)^{2n},\]
%the latter is a $\binom{2n}{2} $-dimensional vector space.
%Alternatively, note that
%we have
%\[H^i(\Omega^{2-i}_X\llog{D})=
%H^i(T_X^{2-i}\mlog{D})=0, i=1, 2.\]
%Consequently, by Deligne's $E_1$-degeneration theorem,
%\[H^2(U, \C)=\HH^2(\Omega^\bullet_X\llog{D})
%=H^2(T_X^\bullet\mlog{D})=H^0(T^2_X\mlog{D})\]
%which is just the space of the bivectors $\Pi_{A'}$ above.
\subsection{Toric Poisson structures}
By way of example, we will apply Theorem \ref{mainthm} to toric
Poisson structures on toric varieties. We begin with some general remarks
on toric varieties. Let $X$ be a smooth projective toric variety 
of dimension $d$ with open orbit $U\simeq(\C^*)^d$ and boundary
$D=X\setminus U$,  a divisor with normal crossings. Then the invariant
differentials $dx_i/x_i$ pulled back from the $\C^*$ factors extend to
log differentials on $X$, and in particular,
\eqspl{geq}{h^0(\Omega_X^i\llog{D})\geq \binom{d}{i}.}
On the other hand, note that
\[U\simeq (\C^*)^{d}\sim_{\mathrm
{homotopy}}(S^1)^{d},\] hence $H^i(U, \C)$ is $\binom{d}{i}$-dimensional.
But  by basic mixed Hodge theory(see \cite{deligne-hodge-ii} or \cite{grif-schmid}),
the latter vector space has a filtration with quotients $H^{i-j}(\Omega_X^j
\llog{D})$. Comparing with \eqref{geq} we conclude
\eqspl{log-betti}{
h^i(\Omega_X^j\llog{D})=\begin{cases}0, i>0;\\
\binom{d}{j}, i=0.
\end{cases}
}
Thus,
\eqspl{zero}{
H^i(U, \C)\simeq H^0(\Omega^i_X\llog{D}).
}
Now let $v_1,..., v_d$ be vector fields on $X$
corresponding to the $(\C^*)^d$-
action. They are dual to the $dx_i/x_i$. 
Then for any skew-symmetric $d\times d$ matrix $A=(a_{ij})$
we get a $(\C^*)^d$-invariant  Poisson structure on $X$:
\eqspl{}{
\Pi_A=\sum a_{ij}v_i\wedge v_j.
} Now assume $d=2n$ even, and that $A$ is nonsingular and in 2-general
position. Then $(X, \Pi_A)$ is log-symplectic in 2-general position.
Note that 
\[H^0(T^2_X\mlog{D})\simeq H^0(\Omega^2_X\llog{D})\]
is generated by the $v_i\wedge v_j$. Therefore by applying Theorem
\ref{mainthm} we conclude
\begin{cor}\label{toric}
Notations as above, and assuming $A$ is nonsingular in 2-general position, 
$(X, \Pi)_A$ has unobstructed deformations
parametrized by a germ of $H^0(X, T^2_X\mlog{D})$,
all of the form $(X, \Pi_{A'})$ obtained by deformaing $A$, 
and the tangent space to the
deformation space coincides with $H^2(X\setminus D, \C)$ and is
$\binom{2n}{2}$-dimensional. In particular, all deformations of $(X, \Pi)$
are toric.
\end{cor}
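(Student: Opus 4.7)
My plan is to combine Theorem \ref{mainthm} with the Hodge-theoretic calculations already assembled above for toric varieties. First I would apply Theorem \ref{mainthm} to $(X, \Pi_A)$, which is legitimate because $A$ is in 2-general position by hypothesis; this immediately yields unobstructedness and identifies the tangent space to deformations with $F^1 \mathbb{H}^2(X \setminus D, \C)$.

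Next I would observe that a smooth projective toric variety is rational, so $h^{p,0}_X = 0$ for all $p > 0$, and in particular $h^{2,0}_X = 0$; hence the preceding Corollary upgrades the tangent space to the full $H^2(X \setminus D, \C)$. Combining \eqref{zero} with \eqref{log-betti} identifies this cohomology with $H^0(X, \Omega^2_X \llog{D})$, which is spanned by $\{dx_i/x_i \wedge dx_j/x_j\}_{i < j}$ and is therefore $\binom{2n}{2}$-dimensional; under the isomorphism $\pisharp$ of \eqref{pisharp-graded} it corresponds to $H^0(X, T^2_X \mlog{D})$ with basis $\{v_i \wedge v_j\}_{i < j}$.

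To conclude, I would exhibit the deformations explicitly. For each skew-symmetric matrix $A'$ sufficiently close to $A$, the bivector $\Pi_{A'} = \sum a'_{ij} v_i \wedge v_j$ is automatically Poisson: the torus-invariant fields $v_i$ pairwise commute, so the Schouten bracket $[\Pi_{A'}, \Pi_{A'}]$ vanishes identically. The resulting toric family of log-symplectic structures realizes a $\binom{2n}{2}$-dimensional subfamily in the Poisson deformation space. Since by Theorem \ref{mainthm} the deformation germ is smooth of exactly this dimension, the toric family accounts for all deformations; in particular $X$ does not move and every deformation is of the form $(X, \Pi_{A'})$.

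The point I would expect to require the most care is the implicit claim that the Poisson tangent space is realized entirely by varying the bivector rather than by genuinely deforming $X$. This becomes automatic from the matching of dimensions, but it rests crucially on the reduction via Theorem \ref{mainthm} to the concrete linear space $H^0(T^2_X \mlog{D})$, rather than to a larger hypercohomology group that could a priori carry contributions from $H^1(T_X)$.
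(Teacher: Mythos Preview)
Your proposal is correct and follows essentially the same approach as the paper: apply Theorem \ref{mainthm}, invoke the toric Hodge calculations \eqref{log-betti}--\eqref{zero} to identify the tangent space with $H^0(T^2_X\mlog{D})$, and observe that the explicit family $\Pi_{A'}$ fills out the deformation space. You have supplied more detail than the paper does---in particular the explicit check that $[\Pi_{A'},\Pi_{A'}]=0$ and the dimension-matching argument---but the strategy is the same.
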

When $n=1$, no 2-dimensional Poisson structure is in 2-general position
as has been noted above in \S \ref{general-position-sec}. For $n>1$, taking
$(a_{ij})$ general, or e.g. specifically  $a_{ij}=i-j$
yields 2-general position, while $A$ corresponding to a direct sum of hyperbolic planes does not.\par
In the case $X=\P^{2n}$ a similar result (for $A$ generic) was obtained
by Lima and Pereira \cite{lima-pereira}.
%\vskip 2cm

%\bibliographystyle{amsplain}
%\bibliography{../../../mybib}

\vfill\eject

	\part*{Erratum/Corrigendum added October 2020\\
		'A BOGOMOLOV UNOBSTRUCTEDNESS THEOREM FOR \\
		LOG-SYMPLECTIC MANIFOLDS IN GENERAL POSITION'\\
		(J. Inst. Math. Jussieu 2018)}

	%\affil University of California, Riverside\endaffil
	
	%\address {\nl UC Math Dept. \nl
	%	Skye Surge Facility, Aberdeen-Inverness Road
	%	\nl
	%	Riverside CA 92521 US\nl 
	%	ziv.ran @  ucr.edu\nl
	%	\url{https://profiles.ucr.edu/app/home/profile/zivran}
	%}
	
	%\email {ziv.ran @ucr.edu}
	%\subjclass[2010]{14J40, 32G07}
	%\keywords{Poisson manifold, deformation}
	
	\begin{abstract}
		The general position hypothesis needs strengenthing
	\end{abstract}
%	\maketitle
	I am grateful to Brent Pym (pers. comm.) for showing me by 
	an example due jointly to him with Mykola Matviichuk and Travis Schedler 
	(see arxiv:math/201008692 and below) that the general position hypothesis
	in the paper is not strong enough. As their example shows, it is possible,
	even with
	the 2-general position hypothesis,
	for the inclusion of the log complex to 
	the log-plus complex
	not to induce a surjection on local and even global cohomology, and
	for the Poisson structure to admit deformations where the Pfaffian
	divisor deforms non locally-trivially.
	\par
	We are thus led to introduce a stronger general position condition  that we  call
	'very general position' on a Poisson structure. Throughout this note
	we keep the notations and assumptions of the paper.
	\begin{defn}
		The Poisson pair $(X, \Pi)$ is said to  be in $t$-very general position if
		it is in $t$-general position and
		for any index set $I\subset [m]$ with $|I|\leq t$, the standard basis vectors
		$e_1,...,e_m$ are linearly independent over $\Q$ modulo 
		$\sum\limits_{i\in I}\O_Xk_i$, where $k_i$ is the $i$th column of 
		the submatrix $(b_{ij}:i, j\in[m])$ of $B$.
	\end{defn}
	Using the notations of \S 2.2 of the paper, the significance of 
	the $t$-very general property
	is this.	
	For any $I\subset [m], |I|\leq t$,
	the local log 1-forms $\dlog(x_i), i=1,...,m$
	are linearly independent over $\Q$ modulo 
	$\sum\limits_{i\in I}x_i\phi_i\O_X+\Omega^1_X$. In other words,
	for any set of integers $r_1,...,r_m$, not all zero, we have

	\eqspl{}{\sum r_j\dlog(x_j)
		\not\in  \sum\limits_{i\in I}x_i\phi_i\O_X+\Omega^1_X.
	}
The following is a partial substitute for Lemma 4 of the paper:
\begin{lem*}
	If $(X,\Pi)$ is in $t$-very general position then $Q^\bullet_I$
	is exact in degrees $<t$.
	\end{lem*}
\begin{proof}	Continuing with the notations of the paper, let us further set
	\[\dlog(x)^K=\bigwedge\limits_{k\in K}\dlog(x_k),\ \ 
	x^K=\prod\limits_{k\in K} x_k.\]
	The complex denoted $Q_I$ is generated by subgroups
	\[Q_{I}^{J, K}=
	x^I\phi_I\dlog(x)^J\dlog(x)^K
	\O_{D_I}, \ \ {J\subset I, K\cap I=\emptyset}\]
	with the differential
	\[\delta_I=\wedge\dlog(x^I)+d\]
	where the first summand acts on the $\dlog(x^I)$ factor
	where $d$ acts solely on the $\O_{D_I}$ factor.
	Note that the two summands commute. Let $\hat\O_{D_I}$
	be the formal completion of $\O_{D_I}$ at the maximal ideal $\m$
	of the origin.
	As usual, flatness of $\hat\O_{D_I}$
	over $\O_{D_I}$ yields that it will suffice to
	prove that the formal completion $Q_I\otimes\hat{\O}_{D_I}$ is exact
		in degrees $<t$, i.e. that the sequence
		\[Q_I\otimes(\O_{D_I}/\m^{k+1})\to Q_I\otimes(\O_{D_I}/\m^{k})\to
		Q_I\otimes(\O_{D_I}/\m^{k-1})\]
	is exact in the middle in degrees $<t$.\par
	
	Now assume temporarily that
	the matrix $B=(b_{ij})$ is constant, i.e.
	$\Phi$ has constant
	coefficients in the $\dlog(x_i)$, 
	and hence so do the forms $x_i\phi_i=\sum b_{ij}\dlog(x_j)$.
	Then the complex admits a an action by the torus $T=\mathbb {G}_m^m$,
	which acts trivially on any $\dlog(x_i))$, hence acts only on the
	$\hat{\O}_{D_I}$ factor. Now $\O_{D_I}/\m^k$ 
	decomposes as finite direct product of character sheaves
	$\O_{D_I}^M$ indexed by nonnegative
	$M$-tuples $M$ with $M\cap I=\emptyset$ and $|M|<k$.
	This yields a similar decomposition of $Q_I^{J, K}$
	and $Q_I^\bullet$ into
	subsheaves $Q_I^{J, K, M}$ resp. $Q_I^{\bullet M}$
	as well. Moreover, on $Q_I^{\bullet M}$, $\delta_I$ is simply given by
	wedge product with 
	\[\eta:=\dlog(x^{I}x^{M}).\]
Note that, thanks to our
$t$-very general hypothesis,  $\eta$ 	
is a 'primitive' or cotorsion-free element of $Q^1_I$, i.e. maps to a nonzero
	element of the fibre of $Q^1_I$ at the origin. As is well known and
	easy to prove (e.g. by making $\eta$ part of a basis), 
	wedge product with 
	such an element $\eta$
	yields an exact complex $\wedge^\bullet Q^{1, M}_I=Q_I^{\bullet M}$.
	Therefore $Q_I$ is exact in degrees $< t$, in the case $B$ is constant. 
	
%	**********************\par
	%Now the character sheaves are generated by monomials
	%$x^M$. A nontrivial cocycle for $\delta_I$ con the corresponding
	%subsheaf can be written as \[x_I\phi_Ix^M\sum\limits_{J, K, r}
	%c_{J, K, r}\dlog(x)^J\dlog(X)^K\] 
	%with the $c_{J, K, r}\in \C$,  $Q$-linearly independent. 
	%But then
	%\[\delta_I(x_I\phi_Ix^M\sum\limits_{J, K, r} \dlog(x)^J\dlog(x)^K)=0\]
	%evidently yields a $\Q$-valued relation among wedge
	%products of differentials $\dlog(x_j)$, contradicting our 
	%$t$-very general position hypothesis, proving exactness of $Q_I$
	%in case $B$ has constant coefficients.
	%
	%***********
	%This complex admits that standard homotopy operator $h$
	%determined by
	%\[ \dlog(x_j)\mapsto 1\]
	%(i.e. interior multiplication by $\sum
	%x_i\del_{x_i}$), and the commutator $[\delta_I, h]$ induces multiplication 
	%by $|M|=\sum\limits_{m\in M}m$ on the corresponding component. 
	%
	%  and it is exact in degree 0 by
	%the Lemma below. Therefore the complex is exact in all degrees $\leq t$,
	%provided $\Phi$ has constant coefficients.
%	************\par
	In the general case we use semi-continuity: let
	$\alpha_s $ be multiplication by $s\in \C$ defined 
	locally near the origin, and set $\Phi_s=\alpha_s^*\Phi$.
	For $s=0$, $\Phi_s$ has constant coefficients, hence
	the corresponding complex is exact in degree $< t$.
	Then by semi-continuity the corresponding complex
	remains exact in degrees $< t$ for all small enough $s$. But since the
	complexes are equivalent for all $s\neq 0$, it follows
	that the original complex is exact in degrees $<t$. 
	
	\end{proof}

%	/*********************

	%Replacing Lemma 4 of the paper, we have
	%\begin{lem*}[Lemma 4 corrected]
	%If $\Pi$ is in $2$-very general position then for each $I$, $Q^\bullet_I$
	%is exact in degree 0.
	%	\end{lem*}
	%\begin{proof}[Proof of Lemma]
	%	 Suppose  that
	%	\[d(f\phi_I)=0\] for some nonzero holomorphic function $f$
	%	on $D_I$, which must be non-constant by 2-general position. 
	%	Thus,
	%	\[df\wedge \phi_I+fd\phi_I=0\] in other words,
	%	\[(\dlog (f)+\sum\limits_{i\in I}
	%	 d\log(x_i))\wedge\phi_I=0\in\O_{D_I}\Omega_X\llog{D}.\]
	%	This implies that $\dlog (f)+\sum \dlog(x_i)$ is an $\O_{D_I}$-
	%	linear combination of $\phi_i, i\in I$. In particular 
	%	$f$ can only have zeros  on components of the degeneracy
	%divisor restricted on $D_I$, hence we can write
	%\[f=u\prod\limits_{i\in [m]\setminus I} x_i^{r_i}\]
	%for some unit $u$, with the $r_i$ nonnegative integers, hence
	%\[\dlog(f)\equiv \sum r_i\dlog x_i\mod \Omega^1_X.\]
	%But this contradicts our 2-very general position hypothesis .
	%	\end{proof}
%	**************/\par
	
	As in the paper, we deduce:
	\begin{thm*}[Theorem 8 corrected]
		The conclusions of Theorem 8 hold under the additional hypothesis that
		$(X, \Pi)$ is in 2-very general position.
		
	\end{thm*}
	\begin{example}[M. Matviichuk, B. Pym, T. Schedler, arxiv 2010.08692]
		Consider the matrix
		\eqspl{}{
			B=(b_{ij})=\left (
			\begin{matrix}
				0&1&2&4\\
				-1&0&3&5\\
				-2&-3&0&6\\
				-4&-5&1&0
			\end{matrix}	\right)	
		}
		and the corresponding log-symplectic form on $\C^4$, $\Phi=\sum \limits_{i<j}
		b_{ij}\frac{dz_i}{z_i}\wedge \frac{dz_j}{z_j}$ and corresponding Poisson
		structure $\Pi=\Phi\inv$, both of which extend to $\P^4$ with Pfaffian divisor
		$D=(z_0z_1z_2z_3z_4)$, $z_0=$ hyperplane at infinity. 
		Then $\Pi$ admits the 1st order Poisson deformation
		with bivector $z_3z_4\del_{z_1}\del_{z_2}$, which in fact extends to
		a Poisson deformation of $(\P^4, \Pi)$
		over the affine line $\C$, and the Pfaffian divisor deforms 
		as $(z_3z_4z_0(z_1z_2-tz_3z_4))$, hence non locally-trivially.
		Correspondingly, the log-plus form $z_3z_4\phi_1\phi_2$ is closed (
		and not exact). That  $d(z_3z_4\phi_1\phi_2)=0$ corresponds to the 
		integral column relation
		\[k_1-k_2+(e_1+e_2)-(e_3+e_4)=0\]	
		where the $k_i$ and $e_j$ are the columns of 
		the $B$ matrix and the identity, respectively, showing that $\Pi$, though
		2-general is not 2-very
		general.  
	\end{example}
\vfill\eject
\bibliographystyle{amsplain}
\bibliography{../../../mybib}
\end{document}